\DeclareMathOperator{\tors}{tors}
\begin{document}
\newtheorem{theorem}{Theorem}
\newtheorem{corollary}[theorem]{Corollary}
\newtheorem{conjecture}[theorem]{Conjecture}
\newtheorem{lemma}[theorem]{Lemma}
\newtheorem{proposition}[theorem]{Proposition}
\newtheorem{axiom}{Axiom}[section]
\newtheorem{exercise}{Exercise}[section]
\newtheorem{problem}{Problem}[section]
\newtheorem{definition}[theorem]{Definition}

\newcommand{\cC}{{\mathcal{C}}}
\newcommand{\cO}{{\mathcal{O}}}
\newcommand{\Q}{{\mathbb{Q}}}
\newcommand{\Z}{{\mathbb{Z}}}
\newcommand{\bP}{{\mathbb{P}}}

\title[Torsion of rational elliptic curves over cubic fields]{Torsion of rational elliptic curves\\ over cubic fields}

\author{Enrique Gonz\'alez--Jim\'enez}
\address{Universidad Aut{\'o}noma de Madrid, Departamento de Matem{\'a}ticas and Instituto de Ciencias Matem{\'a}ticas (ICMat), Madrid, Spain}
\email{enrique.gonzalez.jimenez@uam.es}
\urladdr{http://www.uam.es/enrique.gonzalez.jimenez}
\author{Filip Najman}
\address{University of Zagreb, Bijeni\v{c}ka cesta 30, 10000 Zagreb, Croatia}
\email{fnajman@math.hr}
\urladdr{http://web.math.pmf.unizg.hr/~fnajman/}
\author{Jos\'e M. Tornero}
\address{Departamento de \'Algebra, Universidad de Sevilla. P.O. 1160. 41080 Sevilla, Spain.}
\email{tornero@us.es}
\thanks{The first author was partially  supported by the grant MTM2012--35849. The third author was partially supported by the grants FQM--218 and P12--FQM--2696 (FSE and FEDER (EU))}
\keywords{Elliptic curves, Torsion subgroup, rationals, cubic fields.}

\date{\today\,\,:\,\,\currenttime}

\begin{abstract}
Let $E$ be an elliptic curve defined over $\Q$. We study the relationship between the torsion subgroup $E(\Q)_{\tors}$ and the torsion subgroup $E(K)_{\tors}$, where $K$ is a cubic number field. In particular, We study the number of cubic number fields $K$ such that $E(\Q)_{\tors}\neq E(K)_{\tors}$.
\end{abstract}

\maketitle

\section{Introduction}
Let $K$ be a number field. The Mordell-Weil Theorem states that the set of $K$-rational points of an elliptic curve $E$ defined over $K$ is a finitely generated abelian group. That is, $E(K) \simeq E(K)_{\tors} \oplus \Z^r$, where  $E(K)_{\tors}$ is the torsion subgroup and $r$ is the rank. Moreover, it is well known that $E(K)_{\tors}\simeq \cC_m\times\cC_n$ for two positive integers $n,m$, where $m$ divides $n$ and where $\cC_n$ is a cyclic group of order $n$ from now on. 

Let $d$ be a positive integer. The set $\Phi(d)$ of possible torsion structures of elliptic curves defined over number fields of degree $d$ has been deeply studied by several authors. The case $d=1$ was obtained by Mazur \cite{Mazur1,Mazur2}:
$$
\Phi(1) = \left\{ \cC_n \; | \; n=1,\dots,10,12 \right\} \cup \left\{ \cC_2 \times \cC_{2m} \; | \; m=1,\dots,4 \right\}.
$$
The case $d=2$ was completed by Kamienny \cite{Kamienny} and Kenku and Momose \cite{KenkuMomose}. There are not any other cases where $\Phi(d)$ has been completely determined. 

The second author \cite{Najman} has extended this study to the set $\Phi_\Q(d)$ of possible torsion structures over a number field of degree $d$ of an elliptic curve defined over $\Q$. He has obtained a complete description of $\Phi_\Q(2)$ and $\Phi_\Q(3)$. For convenience, we will write here only the latter set:
$$
\Phi_{\Q}(3) = \left\{ \cC_n \; | \; n=1,\dots,10,12,13,14,18,21 \right\} \cup \left\{ \cC_2 \times \cC_{2m} \; | \; m=1\dots,4,7 \right\}.
$$

Fix a possible torsion structure over $\Q$, say $G \in \Phi(1)$. Recently, in \cite{GJT13} the set $\Phi_\Q(2,G)$ of possible torsion structures over a quadratic number field of an elliptic curve defined over $\Q$ such that $E(\Q)_{\tors}\simeq G \in \Phi(1)$ was determined. The first goal of this paper is giving a complete description (see Theorem \ref{main}) of $\Phi_\Q(3,G)$, as was done in \cite[Theorem 2]{GJT13} for the case $d=2$.

Moreover, in \cite{GJT14} the first and third author obtained, for $d=2$ and for all $G\in \Phi(1)$,  the set
$$
\mathcal{H}_{\Q}(d,G) = \{ S_1,...,S_n \}
$$
where, for any $i=1,...,n$, $S_i= \left[ H_1,...,H_m \right]$ is a list, with $H_i \in \Phi_{\mathbb Q}(d,G) \setminus \{ G \}$, and there exists an elliptic curve $E_i$ defined over ${\mathbb Q}$ such that:
\begin{itemize}
\item $E_i({\mathbb Q})_{tors} = G$.
\item There are number fields $K_1,...,K_m$ (non--isomorphic pairwise) of degree $d$ with $E_i \left( K_j \right)_{tors} = H_j$, for all $j=1,...,m$.
\end{itemize}
Note that we are allowing the possibility of two (or more) of the $H_j$ being isomorphic. From these results, it follows \cite{GJT14, Najman2014}:

\begin{corollary}
If $E$ is an elliptic curve defined over $\Q$, then there are at most four quadratic fields $K_i$, $i=1,\dots,4$ (non--isomorphic pairwise), such that $E(K_i)_{\tors}\ne E(\Q)_{\tors}$. That is,
$$
\max_{G \in \Phi(1)} \Big\{ \#S \; \Big| \; S \in \mathcal{H}_{\Q}(2,G) \Big\} = 4.
$$
\end{corollary}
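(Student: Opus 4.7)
The plan is to deduce the bound by combining a structural analysis of quadratic torsion growth with a direct inspection of the tables $\mathcal{H}_{\Q}(2,G)$ established in \cite{GJT14}. The content of the corollary is, in the end, the observation that no entry of those tables has list-length exceeding four, so most of the work is really a verification on an already-known classification.

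First I would record the basic decomposition principle. If $K=\Q(\sqrt{d})$ with nontrivial Galois automorphism $\sigma$ and $P\in E(K)_{\tors}\setminus E(\Q)_{\tors}$, then $P-\sigma(P)$ transports, under the quadratic twist isomorphism $E\simeq E^{d}$ defined over $K$, to a rational torsion point of $E^{d}(\Q)$ whose order equals $\mathrm{ord}(P)$ when $\mathrm{ord}(P)$ is odd. Consequently each quadratic field $K$ witnessing a strict torsion growth is either (i) one of the at most three fields of definition of a nontrivial Galois orbit in $E[2]\setminus E(\Q)[2]$, or (ii) the field $\Q(\sqrt{d})$ attached to a quadratic twist $E^{d}$ whose rational torsion is strictly larger than the part of $E(K)_{\tors}$ coming from the $\sigma$-invariants.

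Next I would import the complete determination of $\Phi_{\Q}(2,G)$ from \cite{Najman}, which pins down the list of possible target groups $H=E(K)_{\tors}$, and then apply the finer classification $\mathcal{H}_{\Q}(2,G)$ from \cite{GJT14}, which for each $G\in\Phi(1)$ records exactly which tuples $[H_1,\dots,H_m]$ can simultaneously occur for a single $E/\Q$. Running through the fifteen groups $G\in\Phi(1)$, one reads off in every case that $\#S\le 4$, giving the claimed upper bound. Sharpness is then exhibited by explicit $G$ and explicit curves already contained in \cite{GJT14, Najman2014} for which four pairwise non-isomorphic quadratic fields really do cause torsion growth.

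The main obstacle is bookkeeping rather than genuinely hard mathematics: the delicate point is ensuring, case by case, that the fields produced by (i) and those produced by (ii) are merged (rather than double counted) and that fields corresponding to distinct twist classes $d\bmod (\Q^{\times})^{2}$ really yield non-isomorphic $K$'s, so that $\#S$ read off the tables coincides with the number of quadratic fields over which the torsion strictly grows.
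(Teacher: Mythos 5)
Your proposal is correct and takes essentially the same route as the paper, which offers no argument of its own for this corollary beyond citing \cite{GJT14, Najman2014}: the statement is exactly your inspection of the classification $\mathcal{H}_{\Q}(2,G)$, reading off that $\#S\le 4$ for every $G\in\Phi(1)$ and that $4$ is attained. One caveat: your auxiliary ``decomposition principle'' is both redundant (by definition $\mathcal{H}_{\Q}(2,G)$ already records tuples over pairwise non--isomorphic quadratic fields, so the merging/double-counting worry in your last paragraph is absorbed into the cited classification) and slightly false as stated --- for $P$ of odd order one has $P-\sigma(P)=2P^{-}$, whose order is that of the anti-invariant component of $P$ and may be a proper divisor of $\mathrm{ord}(P)$ --- but nothing in your argument actually rests on it.
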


Here, we obtain the equivalent description for the case $d=3$. That is, we give a complete description of $\mathcal{H}_{\Q}(3,G)$ for a given $G \in \Phi(1)$ (see Theorem \ref{hQ3}). Precisely, the main results of this paper are the following:

\begin{theorem}\label{main}
For $G \in \Phi(1)$, the set $\Phi_\Q(3,G)$ is the following:
$$
\begin{array}{|c|c|}
\hline
G & \Phi_\Q \left(3,G \right)\\
\hline
\cC_1 & \left\{ \cC_1\,,\, \cC_2 \,,\, {\cC_3} \,,\, {\cC_4} \,,\, {\cC_6}\,,\, \cC_7\,,\, \cC_{13}\,,\, \cC_2\times \cC_2 \,,\, \cC_2 \times \cC_{14}\, \right\} \\
\hline
\cC_2 & \left\{ \cC_2\,,\, {\cC_6}\,,\, \cC_{14}\, \right\} \\ 
\hline
\cC_3 & \left\{ \cC_3\,,\, \cC_{6}\,,\, \cC_{9}\,,\, {\cC_{12}}\,,\, \cC_{21}\,,\, \cC_{2}\times\cC_{6}\right\} \\
\hline
\cC_4 & \left\{ \cC_4 \,,\, {\cC_{12}}\, \right\} \\
\hline
\cC_5 & \left\{ \cC_5, \; \cC_{10} \right\} \\
\hline
\cC_6 & \left\{ \cC_6\,,\, \cC_{18}\, \right\} \\
\hline
\cC_7 & \left\{ \cC_7 \,,\, \cC_{14}\,\right\} \\
\hline
\cC_8 & \left\{ \cC_8\, \right\} \\
\hline
\cC_9 & \left\{ \cC_9\,,\, \cC_{18}\, \right\} \\
\hline
\cC_{10} & \left\{ \cC_{10}\, \right\} \\
\hline
\cC_{12} & \left\{ \cC_{12}\, \right\} \\
\hline
{\cC_2 \times \cC_2}& \left\{ \cC_2 \times \cC_{2}\,,\, \cC_2 \times \cC_{6}\,\right\} \\
\hline
\cC_2 \times \cC_4 & \left\{ \cC_2 \times \cC_4\, \right\} \\
\hline
\cC_2 \times \cC_6 & \left\{ \cC_2 \times \cC_6\, \right\} \\
\hline
\cC_2 \times \cC_8 & \left\{ \cC_2 \times \cC_8\, \right\} \\
\hline
\end{array}
$$
\end{theorem}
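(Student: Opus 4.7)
The plan is a case-by-case analysis guided by two necessary conditions: for each $G\in\Phi(1)$, the admissible $H$ must contain $G$ as a subgroup and lie in $\Phi_\Q(3)$, which has been determined by Najman. This gives a short finite list of candidates for each $G$, and the proof then splits into ruling out the non-occurring candidates and constructing an explicit pair $(E,K)$ for every occurring candidate.

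For the impossibility direction, the central tool is the Galois action on small torsion. The $2$-primary part is controlled by the $2$-division polynomial $f(x)\in\Q[x]$, a cubic: if $E(\Q)[2]\neq 0$ then $f$ already has a rational root and its remaining quadratic factor is either irreducible (if $E(\Q)[2]=\cC_2$) or splits over $\Q$ (if $E(\Q)[2]=\cC_2\times\cC_2$), and in either case the factorization pattern over a cubic extension cannot change, so $E(K)[2]=E(\Q)[2]$ for every cubic $K$; this single observation removes all candidates asking for $2$-part growth when the rational $2$-torsion is already non-trivial. If $E(\Q)[2]=0$ then $f$ is irreducible, and one gains $\cC_2$ over each cubic subfield of its splitting field when $\Gal(f)=S_3$, while one gains $\cC_2\times\cC_2$ over the unique cyclic cubic when $\Gal(f)=\cC_3$. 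For an odd prime $p$, a point of order $p$ defined over a cubic $K$ has a $\Gal(\bar\Q/\Q)$-orbit of size dividing $3$; combining this with the known list $\Phi_\Q(3)$ severely restricts the image of $\bar\rho_{E,p}$. The ``mixed'' cases such as $\cC_1\not\to\cC_{14}$, $\cC_1\not\to\cC_9$, or $\cC_1\not\to\cC_{21}$ are ruled out by showing that the cubic field of definition of the gained $2$-torsion point cannot simultaneously be the cubic field of definition of a gained odd-order point under the Galois-theoretic constraints above.

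For the existence direction, I would exhibit an explicit $(E,K)$ for each entry of the table. Most examples arise from quadratic or cubic twists of curves whose prescribed torsion is already defined over a cubic field, or from inspecting low-genus modular curves $X_1(N)$ and $X_1(m,n)$ which parametrize the desired torsion structures. The exceptional entries $\cC_1\to\cC_{13}$, $\cC_1\to\cC_2\times\cC_{14}$ and $\cC_3\to\cC_{21}$ correspond to the sporadic rational elliptic curves singled out in Najman's proof that $\cC_{13},\cC_2\times\cC_{14},\cC_{21}\in\Phi_\Q(3)$; one checks directly that those specific curves have the stated rational torsion, which yields the required witnesses.

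The main obstacle is the negative direction in the mixed cases, where one must rule out the simultaneous growth of two independent prime components over a single cubic field. Once the Galois-theoretic matching of cubic fields is pinned down (for the $2$-part via the factorization of $f$ and for each odd prime $p$ via the orbit structure of the image of $\bar\rho_{E,p}$ on $E[p]$), the positive direction reduces to enumeration and explicit computation on the finitely many parametric or sporadic models that survive.
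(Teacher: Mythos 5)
Your skeleton (candidates cut down by $G\le H\in\Phi_\Q(3)$, eliminations via the Galois action, explicit witnesses for the survivors) matches the paper's strategy, but two of your load-bearing steps have genuine gaps. First, your control of the $2$-primary part is insufficient: the $2$-division-polynomial argument only proves $E(K)[2]=E(\Q)[2]$ for cubic $K$, and this does \emph{not} ``remove all candidates asking for $2$-part growth when the rational $2$-torsion is already non-trivial.'' It says nothing about points of order $4$ or $8$, so it fails to exclude, for instance, $\cC_4$ or $\cC_8$ from $\Phi_\Q(3,\cC_2)$, $\cC_8$ from $\Phi_\Q(3,\cC_4)$, or $\cC_2\times\cC_8$ growth from $\cC_2\times\cC_4$ --- all entries your plan declares settled by this one observation. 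The paper instead invokes the strictly stronger statement (Proposition \ref{najman} (i), i.e.\ \cite[Lemma 8]{Najman}) that when $E(\Q)_{\tors}$ has non-trivial $2$-Sylow subgroup, $E(K)_{\tors}$ has the \emph{same} $2$-Sylow subgroup; to repair your version you would need the additional argument that a point $P$ of order $4$ or $8$ with $2P$ rational has $[\Q(P):\Q]$ a power of $2$, hence degree $1$ inside a cubic field. Likewise your odd-$p$ ``orbit of size dividing $3$'' remark must be converted, via the Mazur--Kenku isogeny theorems and the cyclicity of isogeny-kernel fields (the paper's Lemma \ref{galois}), into the concrete facts actually used: e.g.\ $E(K)[5]=E(\Q)[5]$ (Proposition \ref{najman} (iii)), and the exclusion of $\cC_2\times\cC_{14}$ from $\Phi_\Q(3,\cC_2\times\cC_2)$ by the non-existence of a rational $28$-isogeny.

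Second, several eliminations do not fit your ``matching of cubic fields of definition'' template at all, and the ideas they require are absent. The case $\cC_9\notin\Phi_\Q(3,\cC_1)$ (and $\cC_{18}$ from $\cC_1,\cC_2$) has no even part to match: the paper uses Lemma \ref{9-3}, i.e.\ the dichotomy ``$9$-isogeny or two independent $3$-isogenies'' plus an isogeny-character computation showing the order-$3$ elements $[4],[7]$ of $\mathrm{Aut}(\langle P\rangle)$ act trivially on $\langle 3P\rangle$, forcing rational $3$-torsion. The case $\cC_{21}\notin\Phi_\Q(3,\cC_1)$ uses rigidity (the only curve attaining $\cC_{21}$ is \texttt{162b1}, whose rational torsion is $\cC_3$), not field-matching. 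The case $\cC_{12}\notin\Phi_\Q(3,\cC_1)$ needs the input $M=\Q(i)$, equivalently $\Delta\in(-1)\cdot(\Q^*)^2$, from Dokchitser--Dokchitser \cite{dd}, before the contradiction $\Q(\zeta_3)\subseteq L$ can be extracted. Most seriously, $\cC_{18}\notin\Phi_\Q(3,\cC_3)$ --- the hardest negative case in the paper --- requires an argument your sketch does not hint at: $L$ is the splitting field of the $2$-division polynomial, the Galois action on $E[9]$ rules out a rational $9$-isogeny and forces $E[9](L)=\cC_3\times\cC_9$, hence $M=\Q(\sqrt{-3})$ by the Weil pairing, so $L$ is a pure cubic field, $E$ is a Mordell curve $Y^2=X^3+n$, and the problem reduces to the single curve \texttt{27a1} (and its $-3$ twist), whose $L$-torsion is computed to be $\cC_6\times\cC_6$. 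Without these items the negative direction is not established; your existence direction (explicit curves and cubic fields, as in the paper's Table \ref{tablagrande}) is fine in outline, apart from the minor inaccuracy that among $\cC_{13}$, $\cC_2\times\cC_{14}$, $\cC_{21}$ only the last is realized by a unique (sporadic) curve.
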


\

\noindent {\bf Remark.} The elements of the sets $\Phi_\Q(3,G)$ were actually found using the computations that can be found in the appendix. These computations also prove that all the listed groups actually are in $\Phi_\Q(3,G)$. The main part of our work has therefore been to prove that there were indeed no more groups in these sets.

\begin{theorem}\label{hQ3}
Let $E$ be an elliptic curve defined over $\Q$. Then:
\begin{itemize}
\item[(i)] There is at most one cubic number field $K$, up to isomorphism, such that 
$$
E(K)_{\tors}\simeq H\ne E(\Q)_{\tors},
$$
for a fixed $H\in \Phi_{\Q}(3)$.
\item[(ii)] There are at most three cubic number fields $K_i$, $i=1,2,3$ (non--isomorphic pairwise), such that 
$$
E(K_i)_{\tors}\ne E(\Q)_{\tors}.
$$
Moreover, the elliptic curve \texttt{162b2} is the unique rational elliptic curve where the torsion grows over three non--isomorphic cubic fields.
\item[(iii)]  Let be $G \in \Phi(1)$ such that $\Phi_\Q \left(3,G \right)\ne \{G\}$. Then the set $\mathcal{H}_{\Q}(3,G)$ consists of the following elements (third row is $h = \#S$, for each $S \in \mathcal{H}_{\Q}(3,G)$):\\[2mm]
\begin{center}
\begin{tabular}{ccc}
\begin{tabular}{|c|l|c|}
\hline
$G$ & $\mathcal{H}_{\Q}(3,G)$ & $h$\\
\hline\hline
\multirow{12}{*}{$\cC_1$} & $\cC_2$  & \multirow{5}{*}{$1$}\\
\cline{2-2}
& {$\cC_4$}   & \\
\cline{2-2}
& {$\cC_6$}   & \\
\cline{2-2}
& {$\cC_2\times \cC_2$}   & \\
\cline{2-2}
& {$\cC_2\times \cC_{14}$}   & \\
\cline{2-3}
& {$\cC_2,\cC_3$}   & \multirow{7}{*}{$2$}\\
\cline{2-2}
& {$\cC_2,\cC_7$ }  & \\
\cline{2-2}
& {$\cC_2,\cC_{13}$}   & \\
\cline{2-2}
& {$\cC_3,\cC_4$}   & \\
\cline{2-2}
& {$\cC_3,\cC_2\times \cC_2$}   & \\
\cline{2-2}
& {$\cC_4,\cC_7$}   & \\
\cline{2-2}
& {$\cC_7,\cC_2\times \cC_2$}   & \\
\cline{2-3}
& {$\cC_2,\cC_3,\cC_7$}   & $3$\\
\hline
\end{tabular}
& \quad \quad 
\begin{tabular}{|c|l|c|}
\hline
$G$ & $\mathcal{H}_{\Q}(3,G)$ & $h$\\
\hline
\hline
\multirow{2}{*}{$\cC_2$} & {$\cC_6$}  & \multirow{2}{*}{$1$}\\
\cline{2-2}
& {$ \cC_{14} $}   & \\
\hline
\multirow{4}{*}{$\cC_3$}  & $ \cC_{6}$   &\multirow{2}{*}{$1$} \\
\cline{2-2}
& {$ \cC_{12} $}   & \\
\cline{2-2}
& {$ \cC_{2}\times\cC_6 $}   & \\
\cline{2-3}
& {$\cC_6,\cC_9$}   & \multirow{2}{*}{$2$} \\
\cline{2-2}
& {$\cC_6,\cC_{21}$}   & \\
\hline
$\cC_4$  & {$ \cC_{12}$}   & $1$\\
\hline
$\cC_5$  & $ \cC_{10}$   & $1$\\
\hline
$\cC_6$  & {$ \cC_{18}$}   & $1$\\
\hline
$\cC_7$  & $ \cC_{14}$   & $1$\\
\hline
$\cC_9$  & $ \cC_{18}$   & $1$\\
\hline
$\cC_{2}\times\cC_2$  & {$\cC_{2}\times\cC_6$}  & $1$\\
\hline
\end{tabular}
\end{tabular}
\end{center}
\end{itemize}
\end{theorem}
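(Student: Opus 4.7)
My overall strategy is to reduce all three parts to case analyses driven by Theorem~\ref{main}, pivoting on one Galois-orbit observation: if $K$ is a cubic number field and $P\in E(K)_{\tors}\setminus E(\Q)_{\tors}$, then the $\mathrm{Gal}(\overline{\Q}/\Q)$-orbit of $P$ divides $[K:\Q]=3$ and has size greater than one, hence exactly three, so $K=\Q(P)$. Thus a cubic field of torsion growth is uniquely determined by any single new torsion point it supports, and that point generates $K$ over $\Q$.

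To prove (i), I would assume two non-isomorphic cubic fields $K_1,K_2$ both satisfy $E(K_i)_{\tors}\simeq H\neq G$, pick $P_i\in E(K_i)_{\tors}\setminus E(\Q)_{\tors}$ of the same order, and note that $K_i=\Q(P_i)$ by the orbit observation. Since $K_1\not\simeq K_2$, the point $P_2$ is not a $\mathrm{Gal}(\overline{\Q}/\Q)$-conjugate of $P_1$, so $\langle E(\Q)_{\tors},P_1,P_2\rangle$ is strictly larger than $H$ and sits inside $E(K_1K_2)_{\tors}$ with $[K_1K_2:\Q]\in\{6,9\}$. I would extract a contradiction with Theorem~\ref{main} row by row, using known constraints on torsion in degrees $6$ and $9$ together with the explicit Galois action on $E[n]$ inside the Galois closure of $K_1$. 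The delicate rows are those where $H/G$ is cyclic of small order, since then the two lifts $P_1,P_2$ of a single generator could a priori live over distinct cubic fields; here one has to analyse the twist structure coming from the character $\chi:G_\Q\to(\Z/n\Z)^{*}$ acting on the relevant $\Q$-rational cyclic subgroup, and show that exactly one cubic subfield of its fixed field produces the point.

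Part (ii) then follows from (i) by bookkeeping: distinct cubic growth fields give distinct $H\in\Phi_\Q(3,G)\setminus\{G\}$, and inspection of Theorem~\ref{main} shows that only $G=\cC_1$ allows more than three candidates. For $G=\cC_1$, I would argue that growths sharing a common prime-order torsion point force the same cubic field, collapsing the eight candidates to at most three genuinely distinct cubic fields: one where new $2$-torsion appears, one where new $3$-torsion appears, and one where new $7$-torsion appears, with $\cC_{13}$-growth handled separately via the short list of isogeny classes carrying a rational $13$-isogeny and shown to be compatible only with an isolated $\cC_2$-growth. To see that three is attained, I would check directly on \texttt{162b2}, whose isogeny class contains a rational $21$-isogeny; uniqueness of this curve (up to $\Q$-isomorphism) follows from a finite search over isogeny classes carrying the required combination of rational isogenies.

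For part (iii), the table is filled by determining, for each $G$ with $\Phi_\Q(3,G)\neq\{G\}$, exactly which subsets of $\Phi_\Q(3,G)\setminus\{G\}$ are simultaneously realizable on a single $E/\Q$. Realizability is witnessed by the explicit curves from the appendix, while non-realizability reduces to the same isogeny-class analysis as in (ii). I expect the main obstacle to be the $G=\cC_1$ row: its thirteen entries require careful management of the combinatorics of coexisting prime-order cubic growths together with the associated rational isogenies, whereas every other row collapses quickly once (i) and (ii) are established.
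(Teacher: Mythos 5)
Your reduction for part (i) is sound and essentially the paper's own argument: since $[K:\Q]=3$ is prime, any point $P\in E(K)_{\tors}\setminus E(\Q)_{\tors}$ satisfies $\Q(P)=K$, and two non-isomorphic cubic fields have compositum of degree exactly $9$ (degree $6$ would force $K_1\simeq K_2$, so your ``$\in\{6,9\}$'' can be sharpened). But the contradiction is not extracted from ``known constraints on torsion in degrees $6$ and $9$'' (no such classification is available); the paper uses the single elementary constraint of Proposition \ref{nthroot}: two independent points of order $m$ give $\cC_m\times\cC_m\le E(K_1K_2)$, hence $\Q(\zeta_m)\subseteq K_1K_2$ and $\varphi(m)\mid 9$, which fails for every relevant $m>2$; the even-order growths are pinned to the field of the $2$--division polynomial (your orbit observation does give this, since the three roots generate conjugate fields), and the residual cases $\cC_3<\cC_9$, $\cC_6<\cC_{18}$ fall to the same cyclotomic argument applied to $\cC_3\times\cC_3\le\langle P,Q\rangle$. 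Your proposed analysis of the isogeny character's fixed field is an unnecessary detour here.

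The genuine gap is in (ii) and (iii). There is no ``short list of isogeny classes carrying a rational $13$--isogeny'': $X_0(13)$ has genus zero with infinitely many rational points, as do $X_0(9)$ and the curve classifying pairs of independent $3$--isogenies, so your finite-search strategy cannot eliminate $[\cC_4,\cC_{13}]$ and $[\cC_2\times\cC_2,\cC_{13}]$ for $G=\cC_1$, nor $[\cC_9,\cC_{12}]$ and $[\cC_2\times\cC_6,\cC_9]$ for $G=\cC_3$. The paper disposes of exactly these cases by Diophantine descent: the torsion growth fixes the square class of $\Delta$ (Proposition \ref{najman}(ii) gives $\Delta\in(-1)\cdot(\Q^*)^2$ when $\cC_4$ is gained; full $2$--torsion over a cubic field forces $\Delta\in(\Q^*)^2$), and this is intersected with parametrized discriminants of curves admitting a $13$--isogeny \cite[Lemma 27]{Najman}, having $\Q(E[3])=\Q(\zeta_3)$ \cite{Paladino}, or admitting a $9$--isogeny \cite{Ingram}, reducing each case to rational points on an explicit rank-zero elliptic curve such as $Y^2=X^3\mp 6X^2+13X$ or $Y^2=X^3\mp 27$. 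None of this machinery is anticipated in your plan. Two further errors compound this: your claim that only $G=\cC_1$ ``allows more than three candidates'' is false, since $\Phi_\Q(3,\cC_3)$ has five nontrivial entries, and it is the $\cC_3$ row --- not $\cC_1$ --- that demands the hardest exclusions above, the opposite of your prediction that every other row ``collapses quickly''; and even where a finite list of $j$--invariants does exist ($X_0(21)$ has rank zero), each class carries infinitely many quadratic twists, so before isolating \texttt{162b2} the paper must additionally argue that only one twist in each of the four families gains $7$--torsion over a cubic field, a step your ``finite search over isogeny classes'' glosses over.
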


The best result previously known \cite[Lemma 3.3]{JKS} stated that the torsion subgroup of a rational elliptic curve grows strictly in only finitely many cubic number fields.

\

\noindent {\bf Notation:} Please note that, in the sequel, for examples and precise curves we will use the Antwerp--Cremona tables and labels \cite{antwerp,cremonaweb}. We will write $G=H$ (respectively $G<H$ or $G \leq H$) for the fact that $G$ is {\em isomorphic} to $H$ (or to  a subgroup of $H$) without further detail on the precise isomorphism.

\section{Auxiliary results}

We will fix once and for all some notations. We will use a short Weierstrass equation for an elliptic curve $E$,
$$
E: \; Y^2 = X^3 + AX + B,  \quad A,B \in \Z,
$$
with discriminant $\Delta$.

For such an elliptic curve $E$ and an integer $n$, let $E[n]$ be the subgroup of all points which order is a divisor of $n$ (over $\overline{\Q}$), and let $E(K)[n]$ be the set of points in $E[n]$ with coordinates in $K$, for a number field $K$. Let us recall the following well-known result \cite[Ch. III, 8.1.1]{Silverman}

\begin{proposition}\label{nthroot}
Let $E$ be an elliptic curve over a number field $K$. If $\cC_m \times \cC_m \le E(K)$, then $K$ contains the cyclotomic field $\Q(\zeta_m)$ generated by the $m$--th roots of unity.
\end{proposition}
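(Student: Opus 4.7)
The plan is to prove this via the Weil pairing, which is the standard approach; the key inputs are its non-degeneracy and Galois-equivariance, and I would cite Silverman for the construction and formal properties rather than building the pairing from scratch.

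First I would observe that since $E[m] \cong \cC_m \times \cC_m$ as an abstract abelian group has exactly $m^2$ elements, the hypothesis $\cC_m \times \cC_m \le E(K)$ forces equality as subgroups of $E(\overline{K})$, i.e.\ the full $m$-torsion is $K$-rational: $E[m] \subseteq E(K)$. Next, I would invoke the Weil pairing
$$
e_m : E[m] \times E[m] \longrightarrow \mu_m \subseteq \overline{K}^{\times},
$$
recording the three properties we need: it is bilinear, non-degenerate, and Galois-equivariant in the sense that $e_m(\sigma P, \sigma Q) = \sigma(e_m(P,Q))$ for every $\sigma \in \mathrm{Gal}(\overline{K}/K)$.

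Then I would choose a $\Z/m\Z$-basis $\{P,Q\}$ of $E[m]$ and set $\zeta := e_m(P,Q)$. The non-degeneracy of $e_m$, combined with bilinearity, implies that $\zeta$ is a primitive $m$-th root of unity: if $\zeta^d = 1$ for some proper divisor $d \mid m$, then $e_m(dP, Q) = 1$, and since $e_m(dP, P) = e_m(P,P)^d = 1$ (using that $e_m$ is alternating), $dP$ would pair trivially with every element of the basis, forcing $dP = 0$, contradicting that $P$ has order $m$. Hence $\zeta$ generates $\mu_m$.

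Finally I would apply Galois-equivariance: since $P,Q \in E(K)$, for each $\sigma \in \mathrm{Gal}(\overline{K}/K)$ we have
$$
\sigma(\zeta) = \sigma(e_m(P,Q)) = e_m(\sigma P, \sigma Q) = e_m(P,Q) = \zeta,
$$
so $\zeta \in K$, and therefore $\Q(\zeta_m) = \Q(\zeta) \subseteq K$. The only genuinely non-trivial ingredient is the existence and properties of the Weil pairing itself, which I would take as a black box from Silverman; the rest is a short verification, so I do not expect a real obstacle.
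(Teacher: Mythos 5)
Your proof is correct and is essentially the paper's own: the paper simply cites Silverman [Ch.~III, 8.1.1], whose proof is exactly this Weil pairing argument (full $m$-torsion rationality, non-degeneracy giving a primitive root $\zeta_m = e_m(P,Q)$, Galois-equivariance forcing $\zeta_m \in K$). Your verification that $\zeta$ is primitive via the alternating property is carried out correctly, so there is nothing to fix.
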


Let us fix the set--up, following \cite{Najman}. Let $K/\Q$ be a cubic extension, and $L$ the normal closure of $K$ over $\Q$. Finally, let $M$ be the only subextension $\Q \subset M \subset L$ such that $[L:M]=3$. Therefore, we have two posible situations:

\begin{itemize}
\item The extension $K/\Q$ is Galois. Then $\Q=M$ and $K=L$.
\item The extension $K/\Q$ is not Galois. Then we have 
$$
\xymatrix{
 & L\ar@{-}[dr]^2  & \\
  & & K\\
  M\ar@{-}[uur]^3 \ar@{-}[dr]_2  & & \\
  & \mathbb Q \ar@{-}[uur]_3 &
  }
$$
\end{itemize}

\noindent{\bf Remark.} Let $\alpha \in \Q$. If there is some $\beta \in K$ with $\alpha = \beta^2$, then $\beta \in \Q$.

\

Now we will recall some results from \cite{Najman} which will come in handy.

\begin{proposition}\label{najman}
Let $E$ be an elliptic curve defined over $\Q$, $K$, $L$ and $M$ as above, $G\in \Phi_\Q(1)$ and $H\in \Phi_\Q(3)$ such that $E(\Q)_{\tors}\simeq G$ and $E(K)_{\tors}\simeq H$.
\begin{itemize}
\item[(i)] If $G$ has a non-trivial $2$-Sylow subgroup, $G$ and $H$ have the same $2$-Sylow subgroup \cite[Lemma 8]{Najman}.  
\item[(ii)] If $\cC_4\not\leq G$, then  $\cC_8 \not\leq H$ and, if $\cC_4 \leq H$, then $M=\Q(i)$ and $\Delta \in (-1) \cdot (\Q^*)^2$ \cite{dd}, \cite[Corollary 12] {Najman}.
\item[(iii)] $E(K)[5]=E(\Q)[5]$ \cite[Lemma 21]{Najman}.
\item[(iv)] If $H= \cC_{21}$, then $E$ is the elliptic curve \texttt{162b1} and $K=\Q(\zeta_9)^+$ \cite[Theorem 2]{Najman}.
\item[(v)] If $G=\cC_7$ then $H\ne \cC_2\times\cC_{14}$ \cite[Proof Prop. 29]{Najman}.
\item[(vi)] If $E(M)$ has no points of order $3$, neither does $E(L)$ \cite[Lemma 13]{Najman}
\end{itemize}
\end{proposition}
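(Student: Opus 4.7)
The proposition is a compilation of six known results, cited respectively to \cite{Najman} and \cite{dd}, so a reasonable plan is to verify each item in turn, exploiting the tower $\Q\subset M\subset L$ with $\mathrm{Gal}(L/\Q)\in\{\cC_3,S_3\}$, together with trace/norm descent from $L$ to $M$ and standard Galois-image analysis at each prime.

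For (i) and (vi) the decisive input is $[L:M]=2$. If $P\in E(L)\setminus E(M)$ has odd prime order $p$, then $P+\sigma(P)\in E(M)$ is nontrivial of order dividing $p$ (odd primes rule out cancellation), which gives (vi). For (i), the $2$-Sylow of $E(L)_\tors$ is $\mathrm{Gal}(L/\Q)$-stable, and a $3$-cycle acting on a finite abelian $2$-group has a large fixed subgroup; combined with the Remark preceding the proposition (square roots of rationals do not lie in proper cubic extensions) and the hypothesis that $G$ already carries some $2$-torsion, this forces the $2$-Sylow of $E(K)_\tors$ to equal that of $E(\Q)_\tors$.

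For (ii) I would quote \cite{dd}: the presence of a point of order $4$ over a cubic $K$ forces the relevant roots of the $4$-division polynomial into $M$, and tracking the Kummer-theoretic constraints pins down $M=\Q(i)$ together with $\Delta\in(-1)\cdot(\Q^*)^2$; the $\cC_8$ clause follows because doubling a point of order $8$ produces a point of order $4$ in $E(K)$, whereupon (b) applied once and then lifted via the $2$-descent structure gives the implication. For (iii), a $5$-torsion point in $E(K)\setminus E(\Q)$ would have Galois orbit of size exactly $3$; inside a Borel of $GL_2(\mathbb{F}_5)$ (order $80$) no orbit of size $3$ on the nonzero points is possible for divisibility reasons, and any image outside a Borel that acts transitively on a $3$-element subset of $E[5]\setminus\{O\}$ is incompatible with the determinant being surjective onto $\mathbb{F}_5^*$ for rational $E$. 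For (v), $G=\cC_7$ and $H=\cC_2\times\cC_{14}$ would require the full $2$-torsion of $E$ to be rational over a cubic field, i.e.\ the $2$-division polynomial would have cyclic Galois group $\cC_3$ over $\Q$; running through the rational one-parameter family $X_1(7)$ of curves with a rational $7$-torsion point and inspecting the resulting discriminants rules this out directly.

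The genuinely hard step is (iv): classifying rational $E$ with $\cC_{21}\le E(K)$ for some cubic $K$. This amounts to enumerating cubic points on $X_1(21)$, a curve of genus $5$. My plan would be to use the map $X_1(21)\to X_0(21)$, whose rational points are completely understood and correspond to a small isogeny class, together with a Mordell--Weil sieve or Chabauty-style argument on cubic points of $X_1(21)$, to show that the only sporadic cubic point of $X_1(21)$ not explained by base change from $\Q$ corresponds to $(\texttt{162b1},\,\Q(\zeta_9)^+)$. This is where the main technical effort sits, and I would follow \cite[Theorem 2]{Najman}; the remaining five items then fall out as relatively direct consequences of the Galois-theoretic framework set up above.
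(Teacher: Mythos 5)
The paper itself gives no proof of this proposition: it is a compendium of results quoted verbatim from \cite{Najman} and \cite{dd}, so your reconstruction can only be judged on internal correctness, and there it breaks down. The central error is that you have inverted the tower. The paper's setup defines $M$ as the unique subfield with $[L:M]=3$, so $M$ is the \emph{quadratic} subfield of the Galois closure $L$ and $L/M$ is a cyclic \emph{cubic} extension; your proofs of (i) and (vi) are built on the assertion ``$[L:M]=2$'' and a trace from $L$ down to $M$ across a quadratic step. Moreover, even where a quadratic step genuinely exists (namely $L/K$), the trace argument for (vi) is invalid: for a point $P$ of odd prime order it is perfectly possible that $\sigma(P)=-P$, in which case $P+\sigma(P)=\cO$. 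This is exactly the quadratic-twist phenomenon --- a curve whose twist by $d$ has rational $3$-torsion acquires a point of order $3$ over $\Q(\sqrt{d})$ with vanishing trace --- so ``odd primes rule out cancellation'' is false. The cited Lemma 13 of \cite{Najman} instead exploits the cyclic cubic action of $\mathrm{Gal}(L/M)$ on the set of points of order $3$ in $E(L)$: that set has $2$ or $8$ elements, both $\equiv 2 \pmod 3$, so orbit counting forces a nonzero fixed point, which then lies in $E(M)$.

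For (i), your key claim that a $3$-cycle acting on a finite abelian $2$-group ``has a large fixed subgroup'' is false, and is contradicted by the paper itself: $\cC_3$ acts on $\cC_2\times\cC_2$ with trivial fixed subgroup, which is precisely how a curve with $G=\cC_1$ attains $H=\cC_2\times\cC_2$ over a cyclic cubic field (curve \texttt{196a1} in Table \ref{tablagrande}). What makes (i) true is the hypothesis that $E(\Q)[2]\neq 0$: the $2$-division polynomial then has a rational root, the remaining $2$-torsion is defined over a field of degree at most $2$, and points of order $4$ or $8$ lying above rational points have fields of definition of $2$-power degree, all of which intersect a cubic field in $\Q$; none of this mechanism appears in your sketch. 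By contrast, your treatment of (iii) is essentially sound and completable (a subgroup of $GL_2(\mathbb{F}_5)$ with an orbit of size exactly $3$ spanning $E[5]$ embeds in $S_3$, so its determinant image has order at most $2$, contradicting surjectivity onto $\mathbb{F}_5^*$; the Borel case dies since $3\nmid 80$), and for (ii), (iv) and (v) you ultimately defer to \cite{dd} and \cite[Theorem 2]{Najman}, which is no more and no less than what the paper does. But as proofs of (i) and (vi) the proposal fails, and the failure is structural (the wrong tower), not cosmetic.
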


Also some results on isogenies will be needed:

\begin{proposition}\label{isogeny}
Let $E$ be an elliptic curve defined over $\Q$, $K$ and $L$ as above. 
\begin{itemize}
\item[(i)] Assume $E$ has a rational $n$-isogeny. Then either $1\leq n \leq 19$, or $n \in  \{21,25,27,37,43,67,163\}$ \cite{Mazur2,Kenku1,Kenku2,Kenku3}.
\item[(ii)] Assume $n$ is odd and not divisible by $3$. If $E(K)$ has a point of order $n$, then $E$ has a rational isogeny of degree $n$ \cite[Lemma 18]{Najman}.
\item[(iii)] If $F$ is a number field and $E$ has two independent isogenies over $F$ with degrees $n$ and $m$, $E$ is isogeneous (over $F$) to an elliptic curve with an $mn$--isogeny  \cite[Lemma 7]{Najman}.
\item[(iv)] If $K=L$, $n$ is an odd integer and $E(K)$ has a point of order $n$, then $E$ has a rational $n$--isogeny \cite[Lemma 19]{Najman}.
\item[(v)] Let $F$ be a quadratic number field, $n$ an odd integer and $E/\Q$ an elliptic curve such that $\cC_n \leq E(F)$. Then $E$ has a rational $n$--isogeny \cite[Lemma 5]{Najman}.
\item[(vi)] Assume $E(K)$ has a point of order $9$. Then either $E/\Q$ has a $9$--isogeny or it has two independent $3$--isogenies \cite[Proposition 14]{Najman}.
\end{itemize}
\end{proposition}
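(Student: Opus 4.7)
The statement to be justified is Proposition \ref{isogeny}, which is a compilation of six cited facts about isogenies of rational elliptic curves. The plan is therefore not to produce new proofs, but to record each clause with reference to the source and, where helpful, to sketch the conceptual shape of the argument that lies behind it. I would organize the proposition's justification into two blocks: the classification result (i), which is a deep structural theorem on modular curves, and the descent-type statements (ii)--(vi), which share a common Galois-theoretic mechanism.

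For (i), the strategy is classical: a rational cyclic $n$-isogeny corresponds to a non-cuspidal rational point on the modular curve $X_0(n)$. One therefore needs to determine, for each positive integer $n$, whether $X_0(n)(\Q)$ contains a non-cuspidal (non-CM) point. For small $n$ the curve has genus $0$ or $1$ and its Mordell--Weil group can be computed explicitly; for larger $n$ the genus grows, and Mazur's arguments bound which $n$ can give new rational points, with the remaining cases dispatched by Kenku. I would not attempt to reprove this here; instead I cite \cite{Mazur2,Kenku1,Kenku2,Kenku3} and quote the final list.

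For (ii), (iv), (v) the unifying idea is the following. If $P\in E(K)$ has order $n$, then the Galois orbit $\{\sigma(P):\sigma\in\mathrm{Gal}(\overline{\Q}/\Q)\}$ is finite of size at most $[L:\Q]$, and the subgroup $C\le E[n]$ it generates is defined over $\Q$. When $n$ is coprime to $[L:\Q]$, a counting argument together with the Weil pairing (Proposition \ref{nthroot}) forces $C$ to be cyclic of order $n$, which is precisely a rational $n$-isogeny. In the quadratic case (v) the orbit has size at most $2$, so $C$ has at most $n^2$ elements but the Weil pairing constraint rules out the full $E[n]$; in the cubic Galois case (iv) one uses that the order-$3$ Galois group acts on a cyclic group of order $n$ with $\gcd(n,3)=1$, so the action has a fixed line. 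Clause (iii) is elementary: composing an $n$-isogeny with an $m$-isogeny whose kernel projects faithfully gives an isogeny with cyclic kernel of order $mn$ on a suitable quotient. Clause (vi) refines (ii) at the prime $3$: analyzing the image of Galois on $E[9]$, one shows that a Galois-stable cyclic subgroup of order $9$ gives a rational $9$-isogeny, and that the only alternative is that $E[3]$ itself decomposes as a sum of two distinct rational lines, yielding two independent $3$-isogenies.

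The main obstacle is undoubtedly (i); the remaining clauses are routine once one has the Galois-module viewpoint on $E[n]$ and the constraint imposed by the Weil pairing, and I would present them by quoting the relevant lemmas of \cite{Najman}. In particular, nothing new has to be proved in this proposition: its role in the paper is to collect the technical ammunition used in the subsequent case analysis of which $H\in\Phi_\Q(3)$ can appear for each $G\in\Phi(1)$.
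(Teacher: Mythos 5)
Your proposal matches the paper exactly in substance: the paper gives no proof of Proposition \ref{isogeny} at all, presenting it purely as a collection of known results with the same citations you give (Mazur and Kenku for (i), and the indicated lemmas of \cite{Najman} for (ii)--(vi)). Your additional conceptual sketches of the Galois-theoretic mechanisms behind (ii)--(vi) are a reasonable gloss and contain no errors that would matter, since you correctly defer the actual proofs to the cited sources, just as the paper does.
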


\begin{lemma}\label{galois}
Let $p$ be prime, $f$ a $p$--isogeny on $E/\Q$, and let $\ker (f)$ be generated by $P$. Then the field of definition $\Q(P)$ of $P$ (and all of its multiples) is a cyclic (Galois) extension of $\Q$ of order dividing $p-1$.
\end{lemma}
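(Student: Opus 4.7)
The plan is to exploit the Galois-module structure of $\ker(f)$. Since $f$ is an isogeny defined over $\Q$, its kernel $\ker(f) = \langle P \rangle$ is a subgroup of $E[p]$ that is stable under the action of $G_\Q := \mathrm{Gal}(\overline{\Q}/\Q)$. This means that for every $\sigma \in G_\Q$, the point $\sigma(P)$ is again a nonzero element of $\langle P \rangle$, so it equals $\chi(\sigma)\cdot P$ for a unique element $\chi(\sigma) \in (\Z/p\Z)^*$.

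The first step is to verify that the map $\chi \colon G_\Q \to (\Z/p\Z)^*$ so defined is a group homomorphism. This follows immediately from the computation $\chi(\sigma\tau)\, P = (\sigma\tau)(P) = \sigma(\chi(\tau) P) = \chi(\tau)\, \sigma(P) = \chi(\sigma)\chi(\tau)\, P$, where one uses that $\chi(\tau) \in \Z$ commutes with the Galois action.

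Next I identify the fixed field. By construction, an element $\sigma \in G_\Q$ fixes $P$ if and only if $\chi(\sigma) = 1$, so $\ker \chi = G_{\Q(P)} := \mathrm{Gal}(\overline{\Q}/\Q(P))$. Consequently, $\Q(P)/\Q$ is a Galois extension (the kernel of a homomorphism is always normal), and the first isomorphism theorem gives
\[
\mathrm{Gal}(\Q(P)/\Q) \;\cong\; G_\Q/\ker\chi \;\cong\; \mathrm{Im}(\chi) \;\leq\; (\Z/p\Z)^*.
\]
Since $(\Z/p\Z)^*$ is cyclic of order $p-1$, every subgroup is cyclic of order dividing $p-1$, yielding the claim for $P$ itself. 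For any nonzero multiple $kP$ (with $1 \leq k \leq p-1$), primality of $p$ ensures that $kP$ is also a generator of $\langle P\rangle$, so $\Q(kP) = \Q(P)$, and the conclusion extends to all multiples of $P$.

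There is no real obstacle here; the lemma is essentially a translation of the fact that a Galois-stable cyclic subgroup of prime order $p$ gives rise to a mod-$p$ character, together with the cyclicity of $(\Z/p\Z)^*$. The only point deserving care is the verification that $\Q(P) = \Q(kP)$ for every nonzero $k$, which justifies the parenthetical ``and all of its multiples'' in the statement.
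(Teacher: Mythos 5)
Your proof is correct and follows essentially the same route as the paper's: both arguments define the isogeny character $\chi$ attached to the Galois-stable subgroup $\langle P\rangle$ and conclude by embedding $\mathrm{Gal}(\Q(P)/\Q)$ into $\mathrm{Aut}(\langle P\rangle)\cong(\Z/p\Z)^*$, which is cyclic of order $p-1$. The only (cosmetic) difference is that you define $\chi$ on the absolute Galois group and obtain Galoisness of $\Q(P)/\Q$ and the injection simultaneously from $\ker\chi=\mathrm{Gal}(\overline{\Q}/\Q(P))$, whereas the paper first notes $\Q(P)/\Q$ is Galois and then proves injectivity of $\chi$ on the finite Galois group via a fixed-field argument; you also spell out $\Q(kP)=\Q(P)$ for $k\not\equiv 0 \pmod p$, a point the paper leaves implicit.
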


\begin{proof}
First note that the fact that $F=\Q(P)$ is Galois over $\Q$ follows immediately from the Galois--invariance of $\langle P \rangle$. Let $\chi$ be the character of the isogeny,
$$
\chi: \mbox{Gal}(F/\Q) \, \longrightarrow \mbox{Aut}(\langle P \rangle).
$$
which, to each element of $\mbox{Gal}(F/\Q)$, adjoins its action on $\langle P \rangle$. It is easy to check that this is a homomorphism. 

Suppose that $\chi$ is not an injection. Then there exists an element $\sigma$, not the identity, such that $\chi(\sigma)= \mbox{id}$, so $\langle \sigma \rangle$ acts trivially on $P$. Denoting $F_0=F^{\sigma}$ (the fixed field of $\langle \sigma \rangle$), every automorphism of $\mbox{Gal}(F/F_0)$ fixes $P$, and hence $P$ is $F_0$--rational, which is in contradiction with the minimality of $F$. 

Since $\mbox{Gal}(F/\Q)$ is isomorphic to a subgroup of $\mbox{Aut}\langle P \rangle$, which is isomorphic to $\cC_{p-1}$, we are finished. 
\end{proof}

\begin{lemma}\label{3iso}
If $E(K)$ has a point of order $3$ over a cubic field $K$, then $E$ has a $3$--isogeny over $\Q$.
\end{lemma}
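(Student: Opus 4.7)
The plan is to study the action of the absolute Galois group $\mathrm{Gal}(\overline{\Q}/\Q)$ on the set of four $\mathbb{F}_3$-lines in $E[3]$ and exhibit one that is Galois-stable; such a line is exactly the kernel of a rational $3$-isogeny. I would first observe that $E[3]\simeq(\mathbb{F}_3)^2$ contains exactly four $1$-dimensional subspaces (i.e., cyclic subgroups of order $3$), and that $\mathrm{Gal}(\overline{\Q}/\Q)$ acts $\mathbb{F}_3$-linearly on $E[3]$, hence permutes these four lines.

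Given $P\in E(K)$ of order $3$, set $\ell:=\langle P\rangle$. Since $\mathrm{Gal}(\overline{\Q}/K)$ fixes $P$ pointwise, it fixes $\ell$ setwise, so the stabilizer of $\ell$ in $\mathrm{Gal}(\overline{\Q}/\Q)$ contains $\mathrm{Gal}(\overline{\Q}/K)$. By orbit--stabilizer, the $\mathrm{Gal}(\overline{\Q}/\Q)$-orbit of $\ell$ has size dividing
$$
[\mathrm{Gal}(\overline{\Q}/\Q):\mathrm{Gal}(\overline{\Q}/K)] = [K:\Q] = 3,
$$
hence is either $1$ or $3$.

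If the orbit has size $1$, then $\ell$ is already Galois-stable and gives the required rational $3$-isogeny. If the orbit has size $3$, it accounts for three of the four lines in $E[3]$; the remaining fourth line then forms a singleton orbit and is automatically Galois-stable, again yielding a rational $3$-isogeny.

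Note that no case distinction on whether $K/\Q$ is Galois is required — the only input is $[K:\Q]=3$ together with the existence of a $3$-torsion point rational over $K$, so in particular Proposition \ref{isogeny}(iv) is not needed here. I do not anticipate a substantive obstacle; the one \emph{trick} is the combinatorial observation that a group acting on a $4$-element set with an orbit of size $3$ must fix the remaining element, which conveniently avoids the explicit Jordan-form/trace computations one might otherwise perform on the matrix of a generator of $\mathrm{Gal}(L/M)$ acting on $E[3]$.
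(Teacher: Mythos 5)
Your proof is correct, and it takes a genuinely different route from the paper's. The paper disposes of this lemma in two lines of citations: passing to the Galois closure $L$ of $K$ and the subfield $M$ with $[L:M]=3$, it uses Proposition \ref{najman}(vi) to descend the point of order $3$ from $E(L)$ down to $E(M)$, a field of degree at most $2$, and then invokes Proposition \ref{isogeny}(v) (odd torsion over a quadratic field forces a rational isogeny) to conclude. You instead argue directly on the four $\mathbb{F}_3$-lines of $E[3]$: since the stabilizer of $\ell=\langle P\rangle$ contains $\mathrm{Gal}(\overline{\Q}/K)$, which has index $3$ in $\mathrm{Gal}(\overline{\Q}/\Q)$, the orbit of $\ell$ has size $1$ or $3$; in the latter case the orbits partition the four lines, so the leftover line is a singleton orbit, and in either case a Galois-stable line is the kernel of a $3$-isogeny defined over $\Q$. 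Your argument is self-contained and elementary --- it needs neither the $K$--$L$--$M$ set-up nor the two imported lemmas --- and it makes transparent why $3$ is special: there are $p+1=4$ lines while orbit sizes divide $[K:\Q]=3$. (Note that in the size-$3$ case the isogeny you produce has kernel distinct from $\langle P\rangle$; this is fine, since the lemma asserts only existence.) What the paper's route buys is economy and uniformity within its framework, since the same normal-closure machinery and Najman's lemmas are reused repeatedly in this section, together with generality: Proposition \ref{isogeny}(v) applies to arbitrary odd $n$, whereas your counting trick is tied to the numerical coincidence of $4$ lines against orbit sizes dividing $3$, and would not by itself force a fixed line for larger primes.
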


\begin{proof}
$E(L)$ has a point of order $3$, so $E(M)$ has a point of order 3 from Proposition \ref{najman} (vi). And by Proposition \ref{isogeny} (v), $E$ has a $3$--isogeny over $\Q$.
\end{proof}

\begin{lemma}\label{9-3}
If $E(K)$ has a point of order $9$, then $E(\Q)$ has a point of order $3$. 
\end{lemma}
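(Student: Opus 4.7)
The plan is to split the argument based on whether $K/\Q$ is Galois.

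If $K/\Q$ is Galois (cyclic cubic), I observe that $E(K)[3]$ is $G_\Q$-invariant and has order at most $3$ (since $\Q(\zeta_3)\not\subseteq K$ by Proposition \ref{nthroot}), so $E(K)[3]=\langle 3Q\rangle$ is a $\Q$-rational subgroup of $E[3]$. Lemma \ref{galois} then forces $\Q(3Q)$ to have degree at most $2$, and since $\Q(3Q)\subseteq K$ is cubic we conclude $3Q\in E(\Q)$.

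If $K/\Q$ is not Galois, I first use Proposition \ref{isogeny}(vi) together with Lemma \ref{galois} to obtain a rational $3$-isogeny with kernel $\langle P\rangle$, where $\Q(P)=\Q$ (done) or $\Q(P)=F$ is quadratic. If $3Q\in\langle P\rangle$, then $3Q\in E(F)\cap E(K)=E(\Q)$ since $F\cap K=\Q$. The crux is the subcase $\Q(P)=F$ quadratic with $3Q\notin\langle P\rangle$: here $E[3]=\langle P,3Q\rangle\subseteq E(FK)$, and Proposition \ref{nthroot} together with the uniqueness of $F$ as a quadratic subfield of $FK$ forces $F=\Q(\zeta_3)$, so the Galois character on $\langle P\rangle$ is $\omega_3$. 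Re-applying Proposition \ref{isogeny}(vi): if $E$ has two independent $3$-isogenies, the Weil pairing forces the second character to be trivial, giving a rational point of order $3$. Otherwise $E$ has only the $9$-isogeny, with kernel $\langle\tilde P\rangle$ where $3\tilde P=P$.

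For this remaining subcase I study the mod-$9$ Galois representation on $E[9]$ in a basis $(\tilde P, S)$: it is upper-triangular with diagonal characters $(\tilde\chi,\tilde d)$ satisfying $\tilde\chi\tilde d=\omega_9$, and reduction mod $3$ gives $\tilde\chi\equiv\omega_3$, $\tilde d\equiv 1$. Writing $Q=\alpha\tilde P+\beta S$, the subcase assumption forces $\gcd(\beta,3)=1$, and $G_K$-invariance of $Q$ then gives $\tilde d|_{G_K}=1$. Since $\tilde d$ is valued in the order-$3$ subgroup $\{1,4,7\}$ of $(\Z/9)^*$ and factors through $\mathrm{Gal}(L/\Q)\cong S_3$, which admits no nontrivial $\Z/3$-valued character, $\tilde d=1$ globally; hence $\tilde\chi=\omega_9$ and $\Q(\tilde P)=\Q(\zeta_9)$. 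Combining with $E[9]=\langle\tilde P\rangle\oplus\langle Q\rangle\subseteq E(K\Q(\zeta_9))$ of degree $18$, the mod-$9$ image has order exactly $18$, so $\tilde b|_{G_{\Q(\zeta_9)}}$ is a homomorphism with image $3\Z/9\subseteq\Z/9$. Reducing mod $3$, the cocycle $b$ classifying the extension $0\to\mu_3\to E[3]\to\Z/3\to 0$ vanishes on $G_{\Q(\zeta_9)}$, so factors through $\mathrm{Gal}(\Q(\zeta_9)/\Q)\cong\Z/6$. A short cyclic-group cohomology computation gives $H^1(\Z/6,\mu_3)=0$ for the $\omega_3$-twisted action, so $[b]=0$: the $E[3]$ extension splits as a $G_\Q$-module and $E(\Q)[3]\neq 0$. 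The main obstacle is this cohomological analysis in the $9$-isogeny subcase.
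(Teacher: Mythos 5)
Your proof is correct, but it takes a genuinely different route from the paper's, and the difference is instructive. The paper does not split on whether $K/\Q$ is Galois; it applies Proposition \ref{isogeny}(vi) directly. In the two-independent-$3$-isogenies case it argues that if $E(\Q)[3]$ were trivial then $\Q(E[3])$ would be a biquadratic field, which meets the cubic field $K$ in $\Q$, contradicting $E(K)[3]\neq 0$ --- essentially your $F=\Q(\zeta_3)$/degree-parity considerations compressed into one step. In the $9$-isogeny case the paper assumes the kernel is generated by the \emph{$K$-rational} point $P$ itself (this extra rationality comes from the proof of Najman's Proposition 14 behind the citation, not from the statement of Proposition \ref{isogeny}(vi) as quoted), so the isogeny character on $\mathrm{Gal}(K/\Q)$ lands in the order-$3$ subgroup $\{1,4,7\}$ of $(\Z/9)^*$, whose elements fix $3P$, giving $3P\in E(\Q)$ in two lines. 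Since you worked only from the quoted statement, you could not assume the $9$-isogeny kernel is $K$-rational, and your mod-$9$ analysis ($\tilde d=1$ via $\mathrm{Hom}(S_3,\Z/3)=0$, hence $\tilde\chi=\omega_9$ and $\Q(\tilde P)=\Q(\zeta_9)$) followed by inflation--restriction and $H^1(\Z/6,\mu_3)=0$ is exactly what is needed to close that gap; it even shows the residual subcase forces $E[3]$ to split as a $G_\Q$-module, so the subcase is vacuous. What each approach buys: the paper's argument is much shorter but leans on the stronger, uncited form of the input; yours is self-contained relative to the stated propositions, at the cost of the cohomological machinery, and your Galois-case shortcut ($E(K)[3]$ is $G_\Q$-stable of order exactly $3$, then Lemma \ref{galois}) is cleaner than anything in the paper. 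Two small points you assert without proof, both true but worth a line each: the uniqueness of the quadratic subfield of $FK$ when $K$ is non-Galois (check it on the Galois closure, with group $S_3\times\cC_2$ or $S_3$), and the claim that the mod-$9$ image has order exactly $18$, which needs $\Q(Q)=K$ (harmless: if $\Q(Q)=\Q$ you are done at once, and in any case your argument only uses that the image of $b|_{G_{\Q(\zeta_9)}}$ has order dividing $3$, hence lies in $3\Z/9$).
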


\begin{proof}
By Proposition \ref{isogeny} (vi) $E/\Q$ has either an isogeny of degree $9$ or $2$ isogenies of degree $3$. 

First suppose it has $2$ isogenies of degree $3$ and no $3$--torsion. Then it follows that $\Q(E[3])$ is a biquadratic field and the intersection of $\Q(E[3])$ and $K$ must be trivial (that is, $\Q$), which contradicts the fact that $E(K)$ has non--trivial $3$--torsion. Hence $E(\Q)$ has a $3$--torsion point.

Now suppose $E/\Q$ has a $9$--isogeny $f$, such that $\ker(f)= \langle P \rangle$, and such that $P$ is $K$--rational. Then the isogeny character 
$$
\chi: \mbox{Gal}(K/\Q) \; \longrightarrow \mbox{Aut}(\langle P \rangle)
$$
sends the generator $\sigma$ of $\mbox{Gal}(K/\Q)$ into an element of order $3$ in $\mbox{Aut}(\langle P \rangle)$, i.e. into [4] or [7]. Both of these act trivially on $\langle 3P \rangle$, implying that $E(\Q)$ has non--trivial $3$--torsion.
\end{proof}

\noindent {\bf Remark.} Now and then we will consider the case where we have $K_1$ and $K_2$ two different cubic number fields. Let us write as usual $K_1K_2$ for the compositum field of both extensions. Then one of these two situations hold:

\begin{itemize}
\item $[K_1K_2:\Q] = 9$.
\item $[K_1K_2:\Q] = 6$. In this case, $K_1$ and $K_2$ are isomorphic and $K_1K_2$ is the Galois closure of both fields over $\Q$.
\end{itemize}

\section{Proof of Theorem \ref{main}}

Note that from Proposition \ref{najman} (i), if $G=\cC_{2n}$, for some $n\ne 0$, then $\cC_2\times\cC_{2}\not\subset H$. 

Also from Proposition \ref{najman} (i) and the description of $\Phi_\Q(3)$, we can solve the non--cyclic cases from Theorem \ref{main} easily, as we know that
$$
\Phi_\Q(3,\cC_2 \times \cC_{2n}) \leq
\left\{
\begin{array}{lcl}
\left\{ \cC_2 \times \cC_{2}, \; \cC_2 \times \cC_6, \; \cC_2 \times \cC_{14}  \right\} & & \mbox{if $n=1$},\\ 
\left\{ \cC_2 \times \cC_{2n}  \right\}  & & \mbox{if $n\ne 1$}.
\end{array}
\right.
$$

The only case that will not happen and we cannot discard already is $G = \cC_2 \times \cC_2$, $H=\cC_2 \times \cC_{14}$. But this case cannot happen as, from Proposition \ref{isogeny} (ii) and (iii), that would imply $E$ has a $28$--isogeny, contradicting Proposition \ref{isogeny} (i). This finishes the non--cyclic case.

Let us move therefore to the cyclic case. The groups $H$ from $\Phi_\Q(3)$ that do not appear in some $\Phi_\Q(3,G)$, with a $G<H$ and $G$ cyclic can be ruled out from $\Phi_\Q(3,G)$ most of the times using the previous results. In the table below we indicate:
\begin{itemize}
\item With (i) - (vi), which part of Proposition \ref{najman} is used, 
\item With ({\bf \ref{9-3}}), the case is ruled out from Lemma \ref{9-3},
\item With $-$, the case is ruled out because $G \not\subset H$, 
\item With $\checkmark$, the case is possible (and in fact, it occurs).
\end{itemize}

The table (row$=H$, column$=G$) deals with the case $G$ cyclic. 

\vspace{.3cm}

\begin{center}
\begin{tabular}{|c|c|c|c|c|c|c|c|c|c|c|c|}
\cline{2-12}
\multicolumn{1}{c|}{} & $\cC_1$ & $\cC_2$ & $\cC_3$ & $\cC_4$ & $\cC_5$ & $\cC_6$ & $\cC_7$ & $\cC_8$ & $\cC_9$ & $\cC_{10}$ & $\cC_{12}$ \\
\hline
$\cC_1$ & $\checkmark$ & $-$ & $-$ & $-$ & $-$ & $-$ & $-$ & $-$ & $-$ & $-$ & $-$ \\
\hline
$\cC_2$ & $\checkmark$ & $\checkmark$ & $-$ & $-$ & $-$ & $-$ & $-$ & $-$ & $-$ & $-$ & $-$  \\
\hline
$\cC_3$ & $\checkmark$ & $-$ & $\checkmark$ & $-$ & $-$ & $-$ & $-$ & $-$ & $-$ & $-$ & $-$  \\
\hline
$\cC_4$ &$\checkmark$  & (i) & $-$ & $\checkmark$ & $-$ & $-$ & $-$ & $-$ & $-$ & $-$ & $-$ \\
\hline
$\cC_5$ & (iii) & $-$ & $-$ & $-$ & $\checkmark$ & $-$ & $-$ & $-$ & $-$ & $-$ & $-$  \\
\hline
$\cC_6$ & $\checkmark$ & $\checkmark$ &$\checkmark$  & $-$ & $-$ & $\checkmark$ & $-$ & $-$ & $-$ & $-$ & $-$ \\
\hline
$\cC_7$ & $\checkmark$ & $-$ & $-$ & $-$ & $-$ & $-$ & $\checkmark$ & $-$ & $-$ & $-$ & $-$ \\
\hline
$\cC_8$ & (ii) & (i) & $-$ & (i) & $-$ & $-$ & $-$ & $\checkmark$ & $-$ & $-$ & $-$  \\
\hline
$\cC_9$ & ({\bf \ref{9-3}}) & $-$ & $\checkmark$ & $-$ & $-$ & $-$ & $-$ & $-$ & $\checkmark$ & $-$ & $-$\\
\hline
$\cC_{10}$ & (iii) & (iii) & $-$ & $-$ & $\checkmark$  & $-$ & $-$ & $-$ & $-$ & $\checkmark$ & $-$  \\
\hline
$\cC_{12}$ & {(?)} & (i) & $\checkmark$& $\checkmark$ & $-$ & (i) & $-$ & $-$ & $-$ & $-$ & $\checkmark$ \\
\hline
$\cC_{13}$ & $\checkmark$ & $-$ & $-$ & $-$ & $-$ & $-$ & $-$ & $-$ & $-$ & $-$ & $-$ \\
\hline
$\cC_{14}$ & {(?)} & $\checkmark$ & $-$ & $-$ & $-$ & $-$ & $\checkmark$ & $-$ & $-$ & $-$ & $-$  \\
\hline
$\cC_{18}$ & ({\bf \ref{9-3}}) & ({\bf \ref{9-3}}) &  {(?)} & $-$ & $-$ & $\checkmark$ & $-$ & $-$ & $\checkmark$ & $-$ & $-$  \\
\hline
$\cC_{21}$ & (iv) & $-$ & $\checkmark$ & $-$ & $-$ & $-$ & $-$ & $-$ & $-$ & $-$ & $-$  \\
\hline
$\cC_2 \times \cC_2$ & $\checkmark$ & (i) & $-$ & $-$ & $-$ & $-$ & $-$ & $-$ & $-$ & $-$ & $-$  \\
\hline
$\cC_2 \times \cC_4$ & {(?)} &   (i)  &  $-$  & (i) & $-$ & $-$ & $-$ & $-$ & $-$ & $-$ & $-$ \\
\hline
$\cC_2 \times \cC_6$ & {(?)} &  (i) & $-$ & $-$ & $-$ & (i) & $-$ & $-$ & $-$ & $-$ & $-$ \\
\hline
$\cC_2 \times \cC_8$ & (ii) & (i)  & $-$ & (i) & $-$ & $-$ & $-$ & $(i)$ & $-$ & $-$ & $-$ \\
\hline
$\cC_2 \times \cC_{14}$ & $\checkmark$ &  (i)  & $-$ & $-$ & $-$ & $-$ & (v) & $-$ & $-$ & $-$ & $-$ \\
\hline
\end{tabular}-
\end{center}

\vspace{.3cm}

Let us now discard the remaining cases.

\subsection*{The case $G = \cC_1$, $H = \cC_{12}$.}

In this case, from Proposition \ref{najman} (ii,vi), we already know that $M = \Q(i)$ and $E(M)[3] \neq \{ \cO \}$. Again as above, having points of order $3$ in both $M$ and $K$ implies that these are independent points and hence $E[3](L)\simeq \cC_3 \times \cC_3$, from which it follows that $M = \Q(\zeta_3)$, which is a contradiction.

\subsection*{The case $G = \cC_1$, $H = \cC_{14}$.} In this case $E$ must have a rational $7$--isogeny, from Proposition \ref{isogeny} (ii). Then, from Lemma \ref{galois} we know that $K$ is a cyclic cubic Galois extension, hence $K=L$. Under these circumstances, $E(K)[2]$ cannot be $\cC_2$, as $K$ is  either the splitting field of $X^3+AX+B$ (in which case $E(K)[2] = \cC_2 \times \cC_2$) or is irreducible over $K$, in which case there are no points of order $2$ in $E(K)$.

\subsection*{The case $G = \cC_1$, $H = \cC_2 \times \cC_4$.}

Assume our curve is given in Weierstrass short form
$$
Y^2 = X^3+AX+B.
$$

If $G$ is cyclic and $H$ is not, $K$ must be the splitting field of $X^3+AX+B$. So in this case $\Q=M$, and $K=L$, but this contradicts Proposition \ref{najman} (ii).

\subsection*{The case $G = \cC_1$, $H = \cC_2 \times \cC_6$.}

As in the previous case, $\Q=M$, and $K=L$. But there are points of order $3$ in $E(L)$, so $E(M)[3] \neq \{ \cO \}$, but this contradicts $G= \cC_1$, as $\Q=M$.

\subsection*{The case $G = \cC_3$, $H = \cC_{18}$.} As we gain exactly one $2$--torsion point in the passing from $\Q$ to $K$, we already know that $K$ is not Galois and, in fact, $L$ must be the splitting field of $X^3+AX+B$. Then, from Lemma \ref{galois} and Proposition \ref{isogeny} (vi) we have that $E(\Q)$ must have $2$ isogenies of degree $3$.

Now we look at how $\mbox{Gal}(L/\Q)$ acts on $E[9]$. The $L$--rational points have to be sent to $L$--rational points. So if $P$ is an $L$--rational point of order $9$, the generators of $\mbox{Gal}(L/\Q)$ cannot both send $P$ to a multiple of $P$, because this would imply that $\langle P \rangle$ is $\mbox{Gal}(L/\Q)$--invariant (and hence $\mbox{Gal}(\bar \Q/\Q)$--invariant), which would imply a $9$--isogeny over $\Q$. So this means that $E[9](L)$ is strictly larger than $C_9$. The only possibility is that $E[9](L)=C_3 \times C_9$ and this implies $M=\Q(\sqrt{-3})$ because of Proposition \ref{nthroot}.

As $L$ is the splitting field of $X^3+AX+B$, this really implies $E(L)_{tors} \leq C_6 \times C_{18}$. Moreover, as the quadratic subextension of $L$ is $\Q(\sqrt{-3})$, $L$ is a pure cubic field and our curve is a Mordell curve $Y^2=X^3+n$, for some $n \in \Z$. But the only elliptic curve with $j$--invariant $0$ defined over $\Q$ which has full $3$--torsion over $\Q(\sqrt{-3})$ is \texttt{27a1} (and also its $-3$ twist), and by simply computing that this curve has $L$--torsion $C_6 \times C_6$, we are finished.

\section{Proof of Theorem \ref{hQ3}}

\subsection*{Proof of (i)}
Let $E$ be an elliptic curve defined over $\Q$  such that $E(\Q)_{\tors} \simeq G\in \Phi(1)$ and $H\in \Phi_{\Q}(3)$. Let us prove that there is at most one cubic number field $K$ such that $E(K)_{\tors}\simeq H\ne G$.

First, let be $H=G\times \cC_m$  such that  $\gcd(|G|,m)=1$. Suppose that there exist two cubic fields $K_1$ and $K_2$ such that  $E(K_i)_{\tors}\simeq H$, $i=1,2$. Then $\cC_m\times \cC_m\le E(L)_{\tors}$,  where $L$ is the degree $9$ number field obtained by composition of $K_1$ and $K_2$. Therefore, $\Q(\zeta_m)\subset L$, which implies that $\varphi(m)$ divides $9$. This eliminates the following possibilities: 
\begin{itemize}
\item $G=\cC_1$ and $H\in\{\cC_3,\cC_4,\cC_6,\cC_7,\cC_{13}\}$; 
\item $G=\cC_2$ and $H\in\{\cC_6, \cC_{14}\}$;
\item $G=\cC_3$ and $H\in\{\cC_{12}, \cC_{21}\}$;
\item $G=\cC_4$ and $H=\cC_{12}$;
\item $G= \cC_2\times\cC_2$ and $H=\cC_2\times\cC_6$;
\end{itemize}

On the other  hand, if the order of $G$ is odd then there is at most one $H$ of even order with $G<H$. The cubic field is the one defined by the $2$--division polynomial of the elliptic curve. This argument therefore crosses out the cases:
\begin{itemize}
\item $G=\cC_1$ and $H\in\{\cC_2, \cC_2\times\cC_2,\cC_2\times\cC_{14}\}$; 
\item $G=\cC_3$ and $H\in\{\cC_6, \cC_2\times\cC_6\}$;
\item $G=\cC_5$ and $H=\cC_{10}$;
\item $G=\cC_7$ and $H=\cC_{14}$;
\item $G=\cC_9$ and $H=\cC_{18}$;
\end{itemize}

The remaining cases to be dealt with are $G= \cC_3$ with $H=\cC_{9}$ and $G= \cC_6$ with $H=\cC_{18}$. These are essentialy the same since $\cC_6=\cC_2\times\cC_3$ and $\cC_{18}=\cC_2\times\cC_9$. Assume we have $\langle P\rangle\simeq \cC_9$, $\langle Q\rangle\simeq \cC_9$, where $P$ and $Q$ are defined over two non-isomorphic cubic fields. Therefore $P$ is not a multiple of $Q$ and $Q$ is not a multiple of $P$ and $\cC_3\times\cC_3\leq \langle P,Q\rangle$. This is imposible, since both $P$ and $Q$ would be defined over a field of degree $9$, which cannot contain $\Q(\zeta_3)$.

This proves the first statement of Theorem \ref{hQ3}.

\subsection*{Proof of (ii) and (iii)}
First note that if 
$$
E:Y^2=f(X)
$$ 
is an elliptic curve defined over $\Q$ such that $E(\Q)_{\tors} \simeq G$ has odd order, then $f(X)$ is an irreducible cubic polynomial. Now, denote by $K$ the cubic field defined by $f(X)$, then $H= E(K)_{\tors}$ satisfies that $G\ne H$ and $H$ is of even order. Moreover, $H$ is the unique group of even order such that $H\in S$, for any $S \in \mathcal{H}_{\Q}(3,G)$ because $f(X)$ is the $2$-division polynomial of $E$.

\vspace{1mm}

Now, for any $G\in\Phi(1)$ let us construct the elements $S \in \mathcal H_\Q(3,G)$ in ascending order of $\#S$. In Table \ref{tablagrande} (see Appendix) we show examples for all the possible cases of $S$ (after taking into account the preliminary remark) for any $G\in\Phi(1)$. Now, by (i) we know that there are not repeated elements in any $S \in \mathcal H_\Q(3,G)$. Then the possible cases with $\#S>1$ come from $G=\cC_1, \cC_2, \cC_3$: 

\vspace{1mm}

\noindent \fbox{$G=\cC_1$} 

\vspace{1mm}

We have examples in Table \ref{tablagrande} for any $S\in  \mathcal H_\Q(3,\cC_1)$ with $\#S=2$ except for the cases:
$$
[\cC_4,\cC_{13}],\; [\cC_3,\cC_6],\; [\cC_6,\cC_7],\; [\cC_6,\cC_{13}],
$$
$$
[\cC_2\times\cC_2,\cC_{13}],\; [\cC_2\times\cC_{14},\cC_3],\; [\cC_2\times\cC_{14},\cC_7],\; [\cC_2\times\cC_{14},\cC_{13}].
$$

\begin{itemize}
\item As for $[\cC_4,\cC_{13}]$, if such a curve existed then it would have to have discriminant $-Y^2$ (as it gains $4$--torsion - see Proposition \ref{najman} (ii)) for some rational $Y$. On the other hand, the curve must have a $13$--isogeny over $\Q$, which implies its discriminant is of the form \cite[Lemma 27]{Najman}
$$
\Delta = \Box \cdot t(t^2+6t+13)
$$
where $\Box$ is a rational square. Therefore such a curve would give a rational non--trivial (meaning $Y\neq 0$) solution of the equation 
$$
Y^2= X^3-6X^2+13X,
$$
but one easily checks that there are none.

\item Looking at $[\cC_3,\cC_6]$ we find that $E$ gains full $3$--torsion over the compositum of two cubic extensions, $K_1$ and $K_2$, because the fields cannot be isomorphic, hence the points of order $3$ in $K_1$ and $K_2$ are independent. This implies $\Q(\zeta_3) \subset K_1K_2$, which is impossible as $[K_1K_2:\Q]=9$ in this case.

\item Let us look at the pair $[\cC_6,\cC_7]$. The existence of $\cC_6$ implies a $3$--isogeny over $\Q$ and the existence of $\cC_7$ implies a rational $7$--isogeny, hence $E$ has a $21$--isogeny. Therefore $E$ is a twist of an elliptic curve in the \texttt{162b} isogeny class. It can be seen that only one elliptic curve in each of the $4$ family of twists gains $7$--torsion in a cubic extension. Thus there are in fact $4$ curves that we need to check, all in all. For each of the $4$ curves we can check whether the curve gains any $3$--torsion in the fields where it gains $2$--torsion, and discard all the cases. 

\item The case $[\cC_6,\cC_{13}]$ can be ruled out as, from Proposition \ref{isogeny} (iii) and Lemma \ref{3iso}, it would imply the existence of a curve with a rational $39$--isogeny, contradicting Proposition \ref{isogeny} (i).
\item The case $[\cC_2 \times \cC_2, \cC_{13}]$ is very similar to the first one, the only difference being that, gaining full $2$--torsion over a cubic field, the discriminant must be a square. Anyway, the corresponding equation
$$
Y^2= X^3+6X^2+13X,
$$
still has no solutions with $Y\neq 0$.

\item Let us look at the case $[\cC_2\times\cC_{14},\cC_3]$. A curve featuring these torsion extensions would have a $21$--isogeny from Proposition \ref{isogeny} (ii,iv) and Lemma \ref{3iso} and also would gain full $2$--torsion over a cubic field, so as in the previous case its discriminant must be a square. But the elliptic curves with a $21$--isogeny have discriminant $-2 \cdot \Box$, where $\Box$ is a rational square \cite[pp. 78--80]{antwerp}. Hence this case is not possible.

\item We can remove the  case $[\cC_2\times\cC_{14},\cC_{7}]$, similarly as the second case. In this case we would have two cubic extensions $K_1$ and $K_2$ which must verify $[K_1K_2:\Q]=9$, as $X^3+AX+B$ splits completely in one of them and remains irreducible in the other. As $\Q(\zeta_7)\subset K_1K_2$ using Proposition \ref{nthroot} above, we reach a contradiction. 

\item The last case, that of $[\cC_2\times\cC_{14},\cC_{13}]$, is also removable as it would similarly imply the existence of a rational elliptic curve with a $91$--isogeny.
\end{itemize}

Now, we need to prove that the only $S\in\mathcal H_\Q(3,\cC_1)$ with $\#S=3$ is  $[\cC_2,\cC_{3},\cC_7]$. For this purpose
we have to remove the cases:
$$
[\cC_2,\cC_3,\cC_{13}],\;[\cC_2,\cC_7,\cC_{13}],\;[\cC_3,\cC_4,\cC_{7}],\; [\cC_2\times\cC_2,\cC_{3},\cC_7].
$$

\begin{itemize}
\item The first case can be ruled out as $[\cC_6,\cC_{13}]$ above, for it implies the existence of a rational curve with a $39$--isogeny.
\item The second case, as $[\cC_2\times\cC_{14},\cC_{13}]$ above, would imply the existence of a rational elliptic curve with a $91$--isogeny. Hence it cannot happen.
\item The third case is eliminated by noting that the discriminant of such a curve should be $-Y^2$ (for it gains $4$--torsion) and $-2 \cdot \Box$, where $\Box$ is a rational square (for it has a $21$--isogeny).
\item The last case is similar to the case $[\cC_2\times\cC_{14},\cC_3]$ above.
\end{itemize}

Looking with greater detail at the case $[\cC_2,\cC_{3},\cC_7]$ we find that if a curve gains torsion in such a way in three non--isomorphic cubic fields, it must have a $21$--isogeny and in fact (as in the $[\cC_6,\cC_7]$ case) it can only be a very precise curve a family of twists in the \texttt{162b} isogeny class. There are only $4$ such curves and \texttt{162b2} is the only one that grows strictly in three cubic extensions.

\vspace{1mm}

\noindent \fbox{$G=\cC_2$}

\vspace{1mm}

The only case to discard here is $[\cC_6, \, \cC_{14}]$. If such a curve (say $E$) existed, it would follow that $E$ would have a $3$--isogeny and $7$--isogeny and hence a $21$--isogeny. $E$ would also have to contain $\cC_2$, since the odd isogeny cannot kill this torsion. But there do not exist elliptic curves with $21$--isogenies and non--trivial $2$--torsion over $\Q$ \cite[pp. 78--80]{antwerp}.

\vspace{1mm}

\noindent \fbox{$G=\cC_3$}

\vspace{1mm}

We have examples in Table \ref{tablagrande} for any $S\in  \mathcal H_\Q(3,\cC_3)$ with $\#S=2$ except for the cases:
$$
\left[ \cC_9,\cC_{12} \right], \; \left[ \cC_{12}, \cC_{21} \right], \; \left[\cC_2 \times \cC_6,\cC_9 \right], \; 
 \left[ \cC_2 \times \cC_6, \cC_{21} \right]
 $$

\begin{itemize}
\item $[\cC_9,\cC_{12}]$. From Proposition \ref{isogeny} (vi) our curve has either a $9$--isogeny or two independent $3$--isogenies and $\Q(E[3]) = \Q(\zeta_3)$. Moreover from Proposition \ref{najman} (iii) $\Delta \in (-1) \cdot (\Q^*)^2$.

Assume that $E$ has two independent $3$--isogenies and $\Q(E[3]) = \Q(\zeta_3)$. From \cite[p. 147]{Paladino} we get\footnote{Note there is a misprint in the original article, $h^4$ in the numerator should be replaced by $h^6$.}
$$
\Delta = -216 \frac{b^3(h^6-6h^2b^2+12b^3)}{h^6}, \quad b,h \in \Q.
$$

As $\Delta = -y^2$ for some $y \in \Q$, the existence of $E$ implies there are $b,h,y \in \Q$ with 
$$
\left( \frac{y}{bh} \right)^2 = 6 \left( \frac{b}{h^2} \right) \left[ 1-6 \left( \frac{b}{h^2} \right)^2-12 \left( \frac{b}{h^2}
\right)^3 \right],
$$
that is a rational point on the curve 
$$
Y^2=6X \left( 1-6X^2-12X^3 \right),
$$
but its Mordell--Weil group is trivial, and the trivial point do not yield an elliptic curve $E$.

So we are bound to assume $E$ has a $9$--isogeny. From \cite[Appendix]{Ingram}, it follows that $E$ is a twist of $u^2=v^3+av+b$, where
$$
a= -3x(x^3-24), \quad b= 2(x^6-36x^6+216),
$$
for some $x \in \Q$. Then the discriminat of this curve is 
$$
2^{12} 3^6 (c^3-27) u^{12},
$$
where the twelfth power may appear because of twisting. As this should be in $(-1) \cdot (\Q^*)^2$, it should give a point on
$$
Y^2= X^3-27.
$$

The points in this curve can be easily computed (we have done it with \verb|Magma| \cite{magma}); there is only the point at infinity and a point of order 2 that discriminant $0$, so we are done.

\item Second and fourth cases are not possible, as the only curve whose torsion grows to $\cC_{21}$ is \texttt{162b1}, and this curve fits neither of these cases (see Table \ref{tablagrande}). 

\item $[\cC_2 \times \cC_6,\cC_9]$. This case parallels the first one. The only formal change is that, as we gain full $2$--torsion in a cubic extension, $\Delta \in (\Q^*)^2$. Hence, the same arguments lead us to state that such a curve must yield either a point on
$$
Y^2=-6X \left( 1-6X^2-12X^3 \right),
$$
if it has two independent rational $3$--isogenies, or a point on 
$$
Y^2= X^3+27,
$$
should it have a rational $9$--isogeny. As both cases can be checked to be impossible, we are finished.
\end{itemize}

Finally, we see that there are no $S\in\mathcal H_\Q(3,\cC_3)$ with $\#S=3$. Such $S$ should have two groups of odd order. These must be $\cC_{9}$ and $\cC_{21}$. But again the unique elliptic curve over $\Q$ with $\cC_{21}$ over a cubic field is \texttt{162b1} and for this curve, this is not the case (see Table \ref{tablagrande}).

\section*{Appendix: Computations}

Let $G \in \Phi(1)$, $S= \left[ H_1,...,H_m \right]\in\mathcal{H}_{\Q}(3,G)$, $E$ an elliptic curve defined over $\Q$ such that $E({\mathbb Q})_{\tors} = G$ and let $K_1,\dots,K_m$ cubic fields, such that 
$$
E( K_i)_{tors} = H_i \mbox{ for } i=1,...,m.
$$
Table \ref{tablagrande} shows an example of every possible situation, where
\begin{itemize}
\item the first column is $G$,
\item the second column is $S \in\mathcal{H}_{\Q}(3,G)$,
\item the third column is $\# S$,
\item the fourth column is the label of the elliptic curve $E$ with minimal conductor satisfying the conditions above, 
\item the fifth column displays a defining cubic polynomial corresponding to the respective $K_i$ of $H_i$ in $S$,
\item the sixth column displays the discriminant of the corresponding $K_i$.
\end{itemize}

\begin{table}[ht]
\caption{$h= \# S \mbox{ for } S \in \mathcal{H}_{\Q}(3,G)$}\label{tablagrande}
\begin{tabular}{|c|l|c||c|c|c|}
\hline
$G$ & $\mathcal{H}_{\Q}(3,G)$ & $h$ & label & cubics & $\Delta$\\
\hline\hline
\multirow{22}{*}{$\cC_1$} & $\cC_2$  & \multirow{5}{*}{$1$} & \texttt{11a2} & $x^3 - x^2 + x + 1$ & $-44$\\
\cline{2-2}\cline{4-6}
& {$\cC_4$}   & & \texttt{338b2} & $x^3 - x^2 - 4x + 12$ & $-676$\\
\cline{2-2}\cline{4-6}
& {$\cC_6$}   & & \texttt{108a2} & $x^3 - 2$ & $-108$\\
\cline{2-2}\cline{4-6}
& {$\cC_2\times \cC_2$}   & & \texttt{196a1} & $x^3 - x^2 - 2 x + 1$ & $49$\\
\cline{2-2}\cline{4-6}
& {$\cC_2\times \cC_{14}$}   & & \texttt{1922c1} & $x^3 - x^2 - 10x + 8$ & $961$\\
\cline{2-3}\cline{4-6}
& {$\cC_2,\cC_3$}   & \multirow{14}{*}{$2$}& \texttt{19a2} & $\begin{array}{c}
x^3 - 2x - 2\\
x^3 - x^2 - 6x - 12\end{array}$ & $\begin{array}{c}
-76\\
-1083\end{array}$\\
\cline{2-2}\cline{4-6}
& {$\cC_2,\cC_7$ }  & & \texttt{294a1} & $\begin{array}{c}
x^3 - x^2 - 2x - 6\\
x^3 - x^2 - 2x + 1\end{array}$ &
 $\begin{array}{c}
-1176\\
49\end{array}$
\\
\cline{2-2}\cline{4-6}
& {$\cC_2,\cC_{13}$}   & & \texttt{147b1} & $\begin{array}{c}
x^3 - x^2 + 5x + 1\\
x^3 - x^2 - 2x + 1\end{array}$ & $\begin{array}{c}
-588\\
49\end{array}$\\
\cline{2-2}\cline{4-6}
& {$\cC_3,\cC_4$}   & & \texttt{162d2} & $\begin{array}{c}
x^3 - 2\\
x^3 - 3x - 4\end{array}$ &
$\begin{array}{c}
-108 \\ -324
\end{array}$
\\
\cline{2-2}\cline{4-6}
& {$\cC_3,\cC_2\times \cC_2$}   & & \texttt{196b2} & $\begin{array}{c}
x^3 - x^2 + 5x + 1\\
x^3 - x^2 - 2x + 1\end{array}$ &
$\begin{array}{c}
-588\\ 49
\end{array}$
\\
\cline{2-2}\cline{4-6}
& {$\cC_4,\cC_7$}   & & \texttt{338b1} & $\begin{array}{c}
x^3 - x^2 - 4x + 12\\
x^3 - x^2 - 4x - 1\end{array}$ &
$\begin{array}{c}
-676\\169
\end{array}$
\\
\cline{2-2}\cline{4-6}
& {$\cC_7,\cC_2\times \cC_2$}   & & \texttt{3969a1} & $\begin{array}{c}
x^3 - 21x - 35\\
x^3 - 21x - 28\end{array}$ &
$\begin{array}{c}
3969\\ 3969
\end{array}$
\\
\cline{2-3}\cline{4-6}
& {$\cC_2,\cC_3,\cC_7$}   & $3$& \texttt{162b2} & 
$
\begin{array}{c}
x^3 - 3x - 10\\
x^3 - 2\\
x^3 - 3x - 1
\end{array}$ &
$\begin{array}{c}
-648\\ -108 \\ 81
\end{array}$

\\
\hline
\hline
\multirow{2}{*}{$\cC_2$} & {$\cC_6$}  & \multirow{2}{*}{$1$}& \texttt{14a3} & $x^3 - 7$ & $-1323$\\
\cline{2-2}\cline{4-6}
& {$ \cC_{14} $}   & & \texttt{49a3} & $x^3 - x^2 - 2x + 1$ & $49$\\
\hline
\hline
\multirow{7}{*}{$\cC_3$}  & $ \cC_{6}$   &\multirow{3}{*}{$1$} & \texttt{19a1} & $x^3 - 2x - 2$ & $ -76$\\
\cline{2-2}\cline{4-6}
& {$ \cC_{12} $}   & & \texttt{162d1} & $x^3 - 3x - 4$ & $ -324$\\
\cline{2-2}\cline{4-6}
& {$ \cC_{2}\times\cC_6 $}   & & \texttt{196b1} & $x^3 - x^2 - 2x + 1 $ & $49$\\
\cline{2-3}\cline{4-6}
& {$\cC_6,\cC_9$}   & \multirow{4}{*}{$2$} & \texttt{19a3} & $\begin{array}{c}
x^3 - 2x - 2\\
x^3 - x^2 - 6x + 7\end{array}$ & $\begin{array}{c}
-76 \\ 361
\end{array}$\\
\cline{2-2}\cline{4-6}
& {$\cC_6,\cC_{21}$}   & & \texttt{162b1} & $\begin{array}{c}
x^3 - 3x - 10\\
x^3 - 3x - 1\end{array}$ & $\begin{array}{c}
-648 \\ 81
\end{array}$\\
\hline
\hline
$\cC_4$  & {$ \cC_{12}$}   & $1$& \texttt{90c1} & $x^3 - x^2 - 3x - 3 $ & $-300$\\
\hline
\hline
$\cC_5$  & $ \cC_{10}$   & $1$& \texttt{11a1} & $x^3 - x^2 + x + 1$ & $ -44$\\
\hline
\hline
$\cC_6$  & {$ \cC_{18}$}   & $1$& \texttt{14a4} & $x^3 - x^2 - 2x + 1$&$49$\\
\hline
\hline
$\cC_7$  & $ \cC_{14}$   & $1$& \texttt{26b1} & $x^3 - x - 2$ & $-104$\\
\hline
\hline
$\cC_8$  &    & $0$& \texttt{} & $$ & \\
\hline
\hline
$\cC_9$  & $ \cC_{18}$   & $1$& \texttt{54b3} & $x^3 + 3x - 2$ & $-216$\\
\hline
\hline
$\cC_{10}$  &   & $0$& \texttt{} & $$ & \\
\hline
\hline
$\cC_{12}$  &   & $0$& \texttt{} & $$ & \\
\hline
\hline
$\cC_{2}\times\cC_2$  & {$\cC_{2}\times\cC_6$}  & $1$& \texttt{30a6} & $x^3 - 3$ & $-243$\\
\hline
\hline
$\cC_{2}\times\cC_4$  &   & $0$& \texttt{} & $$ & \\
\hline
\hline
$\cC_{2}\times\cC_6$  &   & $0$& \texttt{} & $$ & \\
\hline
\hline
$\cC_{2}\times\cC_8$  &   & $0$& \texttt{} & $$ & \\
\hline\end{tabular}
\end{table}

\end{document}